\newcommand{\pic}[2]{\raisebox{-.5\height}
{\includegraphics[scale=#2]{#1}}}
\def\TalfaTalfaPrima{\pic{TalfaTalfaPrima}{.350}}
\def\TalfaTbetaEstrella{\pic{TalfaTbetaEstrella}{.350}}
\def\NoSimpleTalfaTbetaEstrella{\pic{NoSimpleTalfaTbetaEstrella}{.350}}
\def\arbol{\pic{arbol}{.450}}
\newtheorem{theorem}{Theorem}
\newtheorem{proposition}[theorem]{Proposition}
\newtheorem{corollary}[theorem]{Corollary}
\renewenvironment{proof}[1][Proof]{\textit{#1.} }
{\hfill \rule{0.5em}{0.5em}}
\begin{document}

\title{Closures of positive braids and the Morton-Franks-Williams inequality}
\author{J. Gonz\'alez-Meneses\footnote{Both authors partially supported by MTM2010-19355 and FEDER. First author partially supported by FQM-P09-5112 and the Australian Research Council's Discovery Projects funding scheme (project number DP1094072).} \ and P. M. G. Manch\'on }
\date{6 August 2013}
\maketitle

\begin{abstract}
We study the Morton-Franks-Williams inequality for closures of simple braids (also known as positive permutation braids). This allows to prove, in a simple way, that the set of simple braids is a orthonormal basis for the inner product of the Hecke algebra of the braid group defined by K\'alm\'an, who first obtained this result by using an interesting connection with Contact Topology.

We also introduce a new technique to study the Homflypt polynomial for closures of positive braids, namely resolution trees whose leaves are simple braids. In terms of these simple resolution trees, we characterize closed positive braids for which the Morton-Franks-Williams inequality is strict. In particular, we determine explicitly the positive braid words on three strands whose closures have braid index three.
\end{abstract}

\section{Introduction} \label{introS}
Let $P_L(v,z)\in \Bbb{Z}[v^{\pm 1},z^{\pm 1}]$ be the two-variable Homflypt polynomial, isotopy invariant of oriented links with normalization $P_{\raisebox{-.8mm}{\epsfysize.10in \epsffile{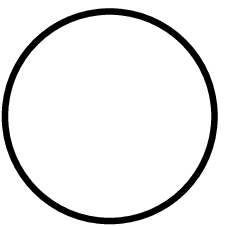}}}(v,z)=1$ and determined by the Homflypt skein relation
$$v^{-1}P_{\raisebox{-.8mm}{\epsfysize.15in \epsffile{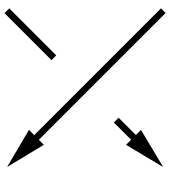}}}(v,z)
-vP_{\raisebox{-.8mm}{\epsfysize.15in \epsffile{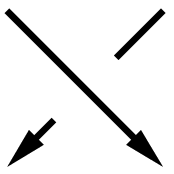}}}(v,z)
=zP_{\raisebox{-.8mm}{\epsfysize.15in \epsffile{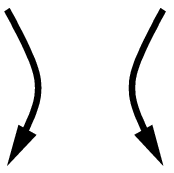}}}(v,z).$$
In the following we will use the notation $P(L)$ instead of $P_L(v,z)$. Note that, for braids, the Homflypt skein relation can be written as $v^{-1}\sigma_i-v\sigma_i^{-1}=z$, or equivalently, as the quadratic skein relation $\sigma_i^2=vz\sigma_i+v^2$.

We first recall and fix terminology about the Morton-Franks-Williams (MFW) bounds and inequalities. If $L=\widehat{b}$ is the closure of a braid $b \in B_n$ with $n$ strands and writhe $w=\textnormal{wr}(b)$, then $w-n+1\leq \partial_v^-(P(L))$ and $\partial_v^+(P(L)) \leq w+n-1$ are the known MFW lower and upper inequalities~\cite{Morton,FW}, where $\partial_v^-(P(L))$ (resp. $\partial_v^+(P(L))$) is the lowest (resp. highest) $v$-degree of $P(L)$. We refer to $w-n+1$ (resp. $w+n-1$) as the MFW lower (resp. upper) bound of $b$. It follows that, if we define
$$
  MFW(L)=\frac{\textnormal{span}_v(P(L))}{2}+1=\frac{\partial_v^+(P(L))-\partial_v^-(P(L))}{2}+1,
$$
we have the celebrated MFW inequality $MFW(L)\leq n$. In particular $MFW(L)\leq s(L)$ where $s(L)$ is the braid index (or Seifert circle index) of $L$.

Let $a,b \in B_n$ be two braids with $n$ strands. Then $\langle a,b \rangle _R$ is, by definition, the coefficient of $v^{w+n-1}$ in the two-variable polynomial $(-z)^{n-1}P(\widehat{ab^*})$, where $b^*$ is the reverse braid of $b$ and $w={\rm wr }(ab^*)$. This product can be extended to the whole of the Hecke algebra $H_n(z)$, obtaining a  symmetric bilinear form. This is the inner product introduced by K\'alm\'an in \cite{Kalman}. Recall that $H_n(z)$ can be seen as the linear combinations of braids in $B_n$ with coefficients in $\Bbb{Z}[z^{\pm 1}]$, quotiented by the Homflypt skein relation with $v=1$.

Given a permutation $\alpha \in S_n$ on $\{ 1, 2, \dots , n\}$, there is exactly one positive braid~$T_{\alpha}$ which determines the permutation $\alpha$ on its endpoints, and such that every two strands of it cross at most once. The braid $T_{\alpha}$ is said to be the simple braid associated to $\alpha$ (originally called positive permutation braid in \cite{MortonElrifai}). Note that ${\rm wr}(T_{\alpha})=l(\alpha )$, the length of the permutation $\alpha$. We will write $T_{\alpha}^*$ for $(T_{\alpha})^*$. It is a well-known result that the set of simple braids on $n$ strands is a basis of $H_n(z)$. Moreover, the main theorem in \cite{Kalman} states that it is an orthonormal basis for the above inner product. The original proof is based on Contact Topology: it constructs a Legendrian representative of the link $\widehat{T_{\alpha}T_{\beta}^*}$, and uses a result by Rutherford~\cite{Rutherford} that relates the ruling polynomial of a front projection of a Legendrian link with its Homflypt polynomial.

In this paper we relate all the above notions, namely we study how simple braids behave with respect to the MFW inequalities, and we apply the obtained results to K\'alm\'an's inner product, and to closures of positive braids on three strands.

More precisely, in Section~\ref{simpleS} we prove that, among all the closures of simple braids, the MFW upper bound is reached only for the closure of the identity braid. This is used in Section~\ref{KalmanS} to give a simple proof of K\'alm\'an's result: the set of simple braids is an orthonormal basis for K\'alm\'an's inner product. In particular, our proof contains implicitly an algorithm for calculating this inner product.

Further, in Section~\ref{resolutionS} we introduce the notion of simple resolution trees, as positive resolution trees whose leaves are simple braids. By using them, we will obtain a characterization of the closed positive braids for which the MFW inequality is sharp (Theorem~\ref{tree} and Corollary~\ref{transformations}). Note that, when working with closures of positive braids, the MFW lower bound is always reached (a fact that we easily reproved by using again simple resolution trees), hence the MFW inequality is sharp if and only if the MFW upper bound is reached. In particular, this technique allows us to determine explicitly in Section~\ref{3braidS} which positive braid words on three strands have closures of braid index three.

{\bf Acknowledgements:} Part of this work was done during a stay of the first author at the Centre de Recerca Matem\`atica (CRM) in Bellaterra (Barcelona, Spain) and at the Department of Applied Mathematics-EUITI, Universidad Polit\'ecnica de Madrid (Spain), and also during a stay of the second author at the Department of Algebra, Universidad de Sevilla (Spain). We thank these institutions for their hospitality.

\section{MFW inequality for simple braids}\label{simpleS}

In this section we show the key result in this paper: the MFW upper bound is reached, among closures of simple braids, only for the identity braid.

\begin{proposition}\label{MFWforSimples}
Let $\alpha \in S_n$ be a permutation with length $w=l(\alpha )$. Then
$\partial_v^+(P(\widehat{T_{\alpha}}))=w+n-1$ if and only if $\alpha= {\rm id}$, and the coefficient of $v^{w+n-1}$ in $P(\widehat{T_{\rm id}})$ is $(-z)^{1-n}$.
\end{proposition}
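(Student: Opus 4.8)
The plan is to induct on the length $w=l(\alpha)$, establishing the degree statement together with the value of the top coefficient in the extremal case. For the base case $w=0$ we have $\alpha=\mathrm{id}$ and $\widehat{T_{\mathrm{id}}}$ is the $n$-component unlink, whose Homflypt polynomial is $\left(\frac{v^{-1}-v}{z}\right)^{n-1}$. Its highest $v$-degree is $n-1=w+n-1$, with coefficient $\left(-\frac{1}{z}\right)^{n-1}=(-z)^{1-n}$; this simultaneously gives the ``if'' direction and the asserted leading coefficient. For the inductive step assume $\alpha\neq\mathrm{id}$ and choose a right descent $s_i$ of $\alpha$, so that $T_\alpha=\beta\sigma_i$ with $\beta=T_{\alpha s_i}$ a simple braid of length $w-1$. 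Applying the Homflypt skein relation at this last crossing yields
\[
P(\widehat{T_\alpha})=v^2\,P(\widehat{\beta\sigma_i^{-1}})+vz\,P(\widehat{\beta}).
\]

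Next I would bound the two summands. For the positive term, if $\beta\neq\mathrm{id}$ (that is, $\alpha\neq s_i$) the inductive hypothesis gives $\partial_v^+(P(\widehat{\beta}))<(w-1)+n-1$; since the $v$-exponents appearing in a Homflypt polynomial all share a common parity, equal to that of its MFW upper bound, this forces $\partial_v^+(P(\widehat{\beta}))\le w+n-4$, so $vz\,P(\widehat{\beta})$ has $v$-degree at most $w+n-3$. If instead $\beta=\mathrm{id}$ (the case $w=1$, $\alpha=s_i$) this term is $vz\left(\frac{v^{-1}-v}{z}\right)^{n-1}$, of top degree $w+n-1$. For the negative term, $\beta\sigma_i^{-1}$ has writhe $w-2$ on $n$ strands, so the MFW upper inequality gives $\partial_v^+(P(\widehat{\beta\sigma_i^{-1}}))\le w+n-3$, whence $v^2\,P(\widehat{\beta\sigma_i^{-1}})$ has $v$-degree at most $w+n-1$. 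Thus the only possible contributions to the coefficient of $v^{w+n-1}$ come from $v^2\,P(\widehat{\beta\sigma_i^{-1}})$ (always) and from the first summand (only when $\alpha=s_i$).

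The real work, and the main obstacle, is to show that this coefficient of $v^{w+n-1}$ actually vanishes for every $\alpha\neq\mathrm{id}$; the degree estimates above are automatic, but a genuine cancellation at the very top of the $v$-range is what must be proved. The prototype is $\alpha=s_i$: there $\widehat{\sigma_i^{-1}}$ is the $(n-1)$-component unlink, and the two surviving leading terms are $(-1)^{n-1}z^{2-n}$ and $(-1)^{n}z^{2-n}$, which cancel. To reproduce this cancellation in general I would strengthen the inductive hypothesis to record, in addition, the leading coefficient at the MFW upper bound both for simple braids and for the auxiliary closures $\widehat{\beta\sigma_i^{-1}}$ produced by the recursion, running the two claims as a single simultaneous induction. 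Concretely, one must analyze $\widehat{\beta\sigma_i^{-1}}$ directly, by further skein resolutions that express it through closures of genuine simple braids, and show that it does not attain its own MFW upper bound unless it is forced to cancel the first summand. Verifying that this bookkeeping of leading coefficients closes up, rather than the elementary degree bounds, is where the difficulty lies.
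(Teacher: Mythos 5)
There is a genuine gap: your argument stops exactly where the proof has to happen, and you say so yourself. Resolving the last crossing of $T_\alpha=\beta\sigma_i$ produces the term $v^2P(\widehat{\beta\sigma_i^{-1}})$, and since $\beta\sigma_i^{-1}$ has writhe $w-2$ on $n$ strands, its MFW upper bound is $(w-2)+n-1$, so after multiplying by $v^2$ this term can a priori contribute at degree $w+n-1$ for \emph{every} non-identity $\alpha$, not just for $\alpha=s_i$. Your prototype computation for $\alpha=s_i$ correctly exhibits the cancellation in that one case, but the general claim --- that the top coefficient of $v^2P(\widehat{\beta\sigma_i^{-1}})$ always cancels whatever survives from the other summand --- is precisely the content of the proposition, and "strengthen the inductive hypothesis to record the leading coefficient of the auxiliary closures $\widehat{\beta\sigma_i^{-1}}$" is a plan, not a proof: these auxiliary braids are no longer positive, no longer simple, and you give no mechanism for controlling their leading coefficients. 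As written, the induction does not close.

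The paper avoids this difficulty entirely by inducting on the number of strands $n$ rather than on the length, exploiting the structure of simple braids instead of the skein relation. If $\alpha$ does not fix $n$, one writes $T_\alpha=T_{\alpha'}\sigma_{n-1}\cdots\sigma_k$ with $\alpha'\in S_{n-1}$; the closure of $T_\alpha$ is then the closure of the braid $T_{\alpha'}\sigma_{n-2}\cdots\sigma_k$ on $n-1$ strands with writhe $w-1$ (a Markov destabilization), and the MFW upper inequality applied to \emph{that} braid already gives $\partial_v^+\le (w-1)+(n-1)-1=w+n-3<w+n-1$, with no cancellation to verify. If instead $\alpha=\alpha'\otimes 1$, then $P(\widehat{T_\alpha})=\delta\, P(\widehat{T_{\alpha'}})$ and the inductive hypothesis on $n-1$ strands applies directly. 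You would need either to switch to this kind of structural decomposition or to actually carry out the leading-coefficient bookkeeping you describe; in its current form the proposal establishes only the easy degree bounds and the single case $\alpha=s_i$.
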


\begin{proof}
If $\alpha ={\rm id} \in S_n$ then $w=0$, $T_{\alpha}=1_n=||\stackrel{n}{\dots}|$ and $\widehat{T_{\alpha}}$ is a collection of $n$ unlinked trivial knots
$\raisebox{-.8mm}{\epsfysize.15in \epsffile{TrivialKnot.eps}}\raisebox{-.8mm}{\epsfysize.15in \epsffile{TrivialKnot.eps}}\stackrel{n}{\dots}\raisebox{-.8mm}{\epsfysize.15in \epsffile{TrivialKnot.eps}}$ with Homflypt polynomial
$$
P(\widehat{T_{\alpha}})=\delta ^{n-1}=\left( \frac{v^{-1}-v}{z}\right)^{n-1}= z^{1-n}v^{1-n} + \dots +(-z)^{1-n}v^{n-1}.
$$
So the result holds for $\alpha={\rm id}$, even in the extreme case when $n=1$.

We will prove the result by induction on $n$, the number of strands of the braid~$T_{\alpha}$.
As the trivial braid is the only braid on $1$ strand, we have already shown the case $n=1$.

Assume now the statement for $\alpha \in S_k$ with $k = 1, \dots , n-1$ and suppose $\alpha \in S_n$. Consider the inclusion $i:S_{n-1} \hookrightarrow S_n \ \ \omega \mapsto \omega \otimes 1$. We distinguish two cases:

    $\bullet $ If $\alpha \in S_n \setminus i(S_{n-1})$, there is a unique permutation $\alpha ' \in S_{n-1}$ and a unique natural number $k<n$ such that $\alpha = \alpha ' s_{n-1} s_{n-2} \dots s_{k}$. It turns out that $T_{\alpha}=T_{\alpha '}\sigma_{n-1} \sigma_{n-2} \dots \sigma_{k}$, as shown in Figure~\ref{TalfaTalfaPrimaF} (see for example \cite{BraidGroups}, page 167).
\begin{figure}[ht!]
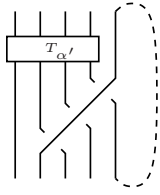

\labellist
\tiny
\pinlabel {$T_{\alpha '}$} at 60 148
\endlabellist
\begin{center}
  \TalfaTalfaPrima
  \end{center}
\caption{$T_{\alpha}=T_{\alpha '}\sigma_{n-1} \sigma_{n-2} \dots \sigma_{k}$, with $n=5$ and $k=2$}\label{TalfaTalfaPrimaF}
\end{figure}

Let $b=T_{\alpha '}\sigma_{n-2} \dots \sigma_{k} \in B_{n-1}$. Clearly $\widehat{T_{\alpha}}=\widehat{b}$, hence by the MFW upper inequality applied to the Homflypt polynomial of $\widehat{b}$, one has:
$$
\partial_v^+(P(\widehat{T_{\alpha}}))
=\partial_v^+(P(\widehat{b})) \leq (w-1)+(n-1)-1=w+n-3 < w+n-1.
$$

$\bullet $ If $\alpha =\alpha ' \otimes 1$ with $\alpha ' \in S_{n-1}$, then $T_{\alpha}=T_{\alpha '} \otimes 1$, $\widehat{T_{\alpha}}=\widehat{T_{\alpha '}} \sqcup \raisebox{-.8mm}{\epsfysize.15in \epsffile{TrivialKnot.eps}}$ and $P(\widehat{T_{\alpha}})=\delta P(\widehat{T_{\alpha '}})$. As we have already shown the result when $\alpha$ is trivial, we can assume that $\alpha\neq {\rm id}$ and then $\alpha'\neq {\rm id}$. Since $\delta=\frac{v^{-1}-v}{z}$, applying the induction hypothesis to $T_{\alpha'}$ which has $n-1$ strands and $w$ crossings, it follows that
$$
\partial_v^+(P(\widehat{T_{\alpha}}))=\partial_v^+(P(\widehat{T_{\alpha '}}))+1 < (w+(n-1)-1)+1 = w+n-1.
$$
\end{proof}

At this point, one could ask for an analogous result for the MFW lower inequality for closures of simple braids. However, it is known that, for closed positive
braids (and simple braids are positive braids) the MFW lower bound is always reached (see for example \cite{KalmanMeridian}, comment after Example 1$\cdot$8). In spite of this, we will give in Section~\ref{resolutionS} a direct proof of this fact, working with simple resolution trees.

\section{Inner products and the Homflypt skein relation} \label{KalmanS}

Recall, from the introduction, the inner product $\langle \cdot \:, \cdot \rangle _R$ defined by K\'alm\'an on the Hecke algebra $H_n(z)$. The following result was first obtained by K\'alm\'an~\cite{Kalman}, who proved it by using an interesting connection with Contact Topology (more details in the introduction). Here we give a simple proof of it, based on the Homflypt skein relation and on properties of the simple braids.

\begin{theorem}(K\'alm\'an) The set of simple braids $\{ T_{\alpha} \}_{\alpha \in S_n }$ is an orthonormal basis for $\langle \cdot \:, \cdot \rangle _R$.
\end{theorem}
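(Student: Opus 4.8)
The goal is to show the simple braids form an orthonormal basis, i.e. $\langle T_\alpha, T_\beta \rangle_R = \delta_{\alpha\beta}$ (Kronecker delta). Since the simple braids are already known to be a basis of $H_n(z)$, the content is entirely in the computation of the inner product values. By definition, $\langle T_\alpha, T_\beta \rangle_R$ is the coefficient of $v^{w+n-1}$ in $(-z)^{n-1} P(\widehat{T_\alpha T_\beta^*})$, where $w = \mathrm{wr}(T_\alpha T_\beta^*)$. The natural strategy is to compute this coefficient directly using Proposition~\ref{MFWforSimples}, by relating the product $T_\alpha T_\beta^*$ back to simple braids.

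My plan is to first treat the diagonal case $\alpha = \beta$. Here $T_\alpha T_\alpha^*$ closes up to the same link as $\widehat{T_{\mathrm{id}}}$ up to Markov moves: the strands of $T_\alpha$ followed by its reverse should cancel, so that $\widehat{T_\alpha T_\alpha^*}$ is isotopic to the $n$-component unlink $\widehat{T_{\mathrm{id}}}$. One must check that the writhe works out so that the relevant top $v$-coefficient is exactly the coefficient of $v^{w+n-1}$; by Proposition~\ref{MFWforSimples} this coefficient of $v^{w+n-1}$ in $P(\widehat{T_{\mathrm{id}}})$ is $(-z)^{1-n}$, and multiplying by the normalizing factor $(-z)^{n-1}$ gives exactly $1$. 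So $\langle T_\alpha, T_\alpha \rangle_R = 1$.

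For the off-diagonal case $\alpha \neq \beta$, I would use the Homflypt skein relation to rewrite the product $T_\alpha T_\beta^*$ as a $\mathbb{Z}[z^{\pm 1}]$-linear combination of simple braids $\sum_\gamma c_\gamma T_\gamma$ in $H_n(z)$ (this is just expanding in the basis), but the key is to track $v$-degrees. The crucial observation is that the top MFW coefficient is governed by the identity braid: by Proposition~\ref{MFWforSimples}, $T_\gamma$ contributes to the coefficient of $v^{\mathrm{wr}+n-1}$ only when $\gamma = \mathrm{id}$. Concretely, each application of the quadratic skein relation $\sigma_i^2 = vz\,\sigma_i + v^2$ either keeps the crossing count and multiplies by $vz$, or drops two crossings and multiplies by $v^2$; tracking how the writhe and the power of $v$ move together, one sees that the coefficient of $v^{w+n-1}$ in $(-z)^{n-1}P(\widehat{T_\alpha T_\beta^*})$ equals $(-z)^{n-1}$ times the appropriately-weighted count of the identity term, which vanishes unless $\alpha = \beta$. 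I would make this precise by showing that when $\alpha \neq \beta$, the permutation associated to the reduced product never collapses to a positive braid word that closes to the unlink at the top $v$-degree, so no identity term survives.

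The main obstacle I anticipate is the bookkeeping in the off-diagonal case: one must show cleanly that the only way to attain the top MFW bound $v^{w+n-1}$ through repeated skein resolutions is via the identity simple braid, and that this occurs with coefficient exactly matching the Kronecker delta. Rather than expanding an arbitrary product, the cleaner route is to set up an induction (mirroring the induction on $n$ in Proposition~\ref{MFWforSimples}) that reduces $T_\alpha T_\beta^*$ strand by strand, using the factorization $\alpha = \alpha' s_{n-1}\cdots s_k$ from the proof of the Proposition. At each reduction step, the MFW upper inequality forces all but the surviving diagonal contribution to have strictly smaller $v$-degree, so only the matching case $\alpha = \beta$ yields the coefficient $1$. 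The delicate point—and where care is required—is verifying that the reverse operation $\beta \mapsto \beta^*$ interacts correctly with this inductive reduction so that the pairing is genuinely symmetric and the cross terms are killed at the top degree.
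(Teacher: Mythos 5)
There is a genuine gap here, in fact two. First, your diagonal case rests on a false claim: $\widehat{T_\alpha T_\alpha^*}$ is \emph{not} isotopic to the $n$-component unlink. The reverse braid $T_\alpha^*$ is obtained by reading the positive word backwards, not by inverting it, so $T_\alpha T_\alpha^*$ is a positive braid with $2l(\alpha)$ crossings; already for $\alpha=s_1\in S_2$ one gets $\sigma_1\sigma_1^*=\sigma_1^2$, whose closure is the Hopf link, not the two-component unlink. (The coefficient of $v^{w+n-1}$ does come out to $(-z)^{1-n}$ in that example, but not because the link is trivial.) So the diagonal computation cannot be reduced to Proposition~\ref{MFWforSimples} applied to $T_{\rm id}$ via an isotopy; some genuine skein-theoretic work is required even when $\alpha=\beta$.

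Second, in the off-diagonal case you correctly identify the crux --- that after resolving $T_\alpha T_\beta^*$ into simple braids only identity leaves can contribute to the coefficient of $v^{w+n-1}$, so one must show that no identity leaf occurs when $\alpha\neq\beta$ and that the identity contribution is exactly $1$ when $\alpha=\beta$ --- but you do not supply an argument for it; you only gesture at an induction on $n$ mirroring Proposition~\ref{MFWforSimples} and flag the interaction with $\beta\mapsto\beta^*$ as ``delicate''. That interaction is precisely where the content of the theorem lies. The paper's proof instead inducts on $l(\beta)$: writing $\beta=\kappa s_i$ with $l(\beta)=l(\kappa)+1$, so that $T_\beta^*=\sigma_iT_\kappa^*$, it splits into the case where $T_\alpha\sigma_i$ is simple (then $T_\alpha T_\beta^*=T_{\alpha s_i}T_\kappa^*$, one checks $\alpha\neq\beta$, and induction gives $\langle T_{\alpha s_i},T_\kappa\rangle_R=0$) and the case where it is not (then $T_\alpha=T_{\alpha_1}\sigma_i$, the quadratic relation gives $\langle T_\alpha,T_\beta\rangle_R=z\langle T_\alpha,T_\kappa\rangle_R+\langle T_{\alpha_1},T_\kappa\rangle_R$, and both terms are evaluated by induction, noting $\alpha_1=\kappa$ exactly when $\alpha=\beta$). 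This single recursion handles the diagonal and off-diagonal cases simultaneously and is the mechanism your outline is missing; to complete your proposal you would need to supply something equivalent to it.
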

\begin{proof}
We want to prove that, for any permutations $\alpha, \beta \in S_n$,
$$
\langle T_{\alpha},T_{\beta}\rangle _R=
\left\{
\begin{array}{cl}
1 & \text{ if } \beta =\alpha , \\
0 & \text{ otherwise. }
\end{array}
\right.
$$
This is equivalent to show that for all $\alpha, \beta \in S_n$, the coefficient of $v^{w+n-1}$ in $P(\widehat{T_\alpha T_\beta^*})$ is $(-z)^{1-n}$ if $\beta =\alpha$, and $0$ otherwise, where $w={\rm wr }(T_{\alpha}T_{\beta}^*)={\rm wr }(T_{\alpha}) + {\rm wr }(T_{\beta}) = l(\alpha )+l(\beta)$. Note that, by the MFW inequality, to say that the coefficient of $v^{w+n-1}$ in $P(\widehat{T_{\alpha}T_{\beta}^*})$ is zero is equivalent to say that $\partial_v^+(P(\widehat{T_{\alpha}T_{\beta}^*}))<w+n-1$.

The proof is by induction on the length $l(\beta)$ of the permutation $\beta$. If $l(\beta )=0$, then $\beta ={\rm id}$, $T_{\beta}=1_n \in B_n$, $\widehat{T_{\alpha}T_{\beta}^*}=\widehat{T_{\alpha}}$ and $w=l(\alpha)$. Then the result follows from Proposition~\ref{MFWforSimples}. Assume now that $l(\beta ) \geq 1$. Let $\beta =\kappa s_i$ with $l(\beta )=l(\kappa )+1$, hence $T_{\beta}=T_{\kappa }\sigma_i$ and $T_{\beta }^*=\sigma_i T_{\kappa }^*$.

\begin{enumerate}
  \item If $T_{\alpha}\sigma_i$ is a simple braid (equivalently $T_{\alpha}\sigma_i=T_{\alpha s_i}$), then we will see that $\alpha \not= \beta$ and $\langle T_\alpha, T_\beta \rangle_R=0$. Indeed, $\alpha =\beta$ would imply $T_\kappa \sigma_i\sigma_i=T_\beta \sigma_i =~T_\alpha \sigma_i$ to be a simple braid, a contradiction since in $T_\kappa \sigma_i\sigma_i$ the strands ending in positions $i$ and $i+1$ cross at least twice. In particular, $\alpha \not= \kappa s_i$ hence $\alpha s_i \not= \kappa$. Then (see Figure~\ref{FigureCaseOne}) $T_{\alpha}T_{\beta}^*=T_{\alpha } \sigma_i T_{\kappa}^*=T_{\alpha s_i} T_{\kappa}^*$ hence $\langle T_{\alpha}, T_{\beta}\rangle _R = \langle T_{\alpha s_i}, T_{\kappa}\rangle _R=0$ where we have applied induction in the last equality since $l(\kappa )<l(\beta )$.
\begin{figure}[ht!]
\labellist
\pinlabel {$T_{\alpha}$} at 50 95
\pinlabel {$T_{\alpha}$} at 200 95
\pinlabel {$T_{\alpha s_i}$} at 352 85
\pinlabel {$=$} at 123 60
\pinlabel {$\sigma_i$} at 200 60
\pinlabel {$=$} at 273 60
\pinlabel {$T_{\beta}^*$} at 50 40
\pinlabel {$T_{\kappa}^*$} at 200 25
\pinlabel {$T_{\kappa}^*$} at 350 25
\endlabellist
\begin{center}
  \TalfaTbetaEstrella
  \end{center}
\caption{Case $T_{\alpha}\sigma_i$ simple: $T_{\alpha}T_{\beta}^*=T_{\alpha}\sigma_iT_{\kappa}^*=T_{\alpha s_i}T_{\kappa}^*$}\label{FigureCaseOne}
\end{figure}

\item If $T_{\alpha}\sigma_i$ is a non-simple braid, then $l(\alpha s_i)=l(\alpha)-1$ and there exists a reduced expression $\alpha =\alpha_1 s_i$ of $\alpha$ ending with $s_i$ and $T_{\alpha}=T_{\alpha_1}\sigma_i$.

\begin{figure}[ht!]
\labellist
\pinlabel {$T_{\alpha}$} at 50 102
\pinlabel {$T_{\beta}^*$} at 50 37
\pinlabel {$T_{\alpha_1}$} at 200 118
\pinlabel {$T_{\alpha_1}$} at 350 118
\pinlabel {$=$} at 123 70
\pinlabel {$=$} at 273 70
\pinlabel {$\sigma_i$} at 200 85
\pinlabel {$\sigma_i$} at 200 60
\pinlabel {$\sigma_i^2$} at 350 73
\pinlabel {$T_{\kappa}^*$} at 200 25
\pinlabel {$T_{\kappa}^*$} at 350 25
\endlabellist
\begin{center}
  \NoSimpleTalfaTbetaEstrella
  \end{center}
\caption{Case $T_{\alpha}\sigma_i$ non-simple: $T_{\alpha}T_{\beta}^*=T_{\alpha_1}\sigma_i^2T_{\kappa}^*$} \label{FigureCaseTwo}
\end{figure}

Then $T_{\alpha}T_{\beta}^*=T_{\alpha_1}\sigma_i^2T_{\kappa}^*$ (see Figure~\ref{FigureCaseTwo}) and, by the quadratic relation $\sigma_i^2=vz\sigma_i+v^2$,
$$
P(\widehat{T_{\alpha}T_{\beta}^*})
= P(\widehat{T_{\alpha_1}\sigma_i ^2T_{\kappa}^*})
= vzP(\widehat{T_{\alpha}T_{\kappa}^*})
+v^2P(\widehat{T_{\alpha_1}T_{\kappa}^*}).
$$

Multiplying the above equality by $(-z)^{n-1}$ and considering the coefficients of $v^{w-n+1}$, it follows that:
$$
\langle T_{\alpha},T_{\beta}\rangle_R
=z\langle T_{\alpha},T_{\kappa}\rangle_R + \langle T_{\alpha_1},T_{\kappa}\rangle_R.
$$

Note that $\alpha_1=\kappa \Leftrightarrow \alpha_1s_i=\kappa s_i \Leftrightarrow \alpha=\beta$. We finally distinguish two cases; induction will be applicable since $l(\kappa)<l(\beta)$:

\vspace{0.1cm}

\noindent $\bullet$ Assume $\alpha = \beta$. Then $\alpha \not= \kappa$ and $\alpha_1 =\kappa$, hence
$$
\langle T_{\alpha},T_{\beta}\rangle_R
=z\langle T_{\alpha},T_{\kappa}\rangle_R + \langle T_{\alpha_1},T_{\kappa}\rangle_R
=z\!\cdot \!0+1=1.
$$

\noindent $\bullet$ Assume $\alpha \not= \beta$. In particular $\alpha_1 \not= \kappa$. Moreover, $\alpha \not= \kappa$ since, otherwise, $T_{\alpha}\sigma_i=T_{\kappa}\sigma_i=T_{\beta}$ would be a simple braid. Hence
$$
\langle T_{\alpha},T_{\beta}\rangle_R
=z\langle T_{\alpha},T_{\kappa}\rangle_R + \langle T_{\alpha_1},T_{\kappa}\rangle_R
=z\!\cdot \!0+0=0.
$$
\end{enumerate}\vspace{-0.4cm}\end{proof}

\section{Positive braids and the Morton-Frank-Williams inequality}\label{resolutionS}

\noindent Suppose that $b$ is a positive braid whose closure is the oriented link $L$. Based on the quadratic relation $\sigma_i^2=vz\sigma_i+v^2$, in order to calculate the Homflypt polynomial of $L$ we can construct a resolution tree of $b$. That is, a binary tree with root $b$ and where each ramification has the following form, with $P$ and $Q$ positive braids:

\begin{figure}[ht!]
\begin{center}
\setlength{\unitlength}{1mm}
\begin{picture}(30, 20)
  \put(10,16){$P\sigma_i^2 Q$}
  \put(5,9){$v^2$}
  \put(20,9){$vz$}
  \put(0,0){$PQ$}
  \put(21,0){$P\sigma_i Q$}
  \put(14, 14){\line(1, -1){10}}
  \put(14, 14){\line(-1, -1){10}}
\end{picture}
\end{center}
\vspace{-0.3cm}\caption{Parent $P\sigma_i^2Q$, left child $PQ$ and right child $P\sigma_i Q$}
\end{figure}
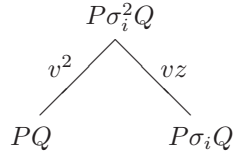

It is known (see the proof of Theorem~\ref{tree}) that simple braids are precisely those positive braids which cannot be written as $P\sigma_i^2Q$, with $P$ and $Q$ positive braids. This means that if a simple braid appears in a resolution tree, it must necessarily be a leaf.  A resolution tree is called {\it simple} if all the leaves are simple braids. As far as we know, simple resolution trees have not been considered yet; positive resolution trees have been used for example in \cite{Nakamura}.

As an example, we show in Figure~\ref{EPP} a simple resolution tree for the braid $b=32322323$ (meaning $\sigma_3\sigma_2\sigma_3\sigma_2\sigma_2\sigma_3\sigma_2\sigma_3$),  with writhe $w=8$ and $n=4$ strands. This tree shows in an explicit way that the Homflypt polynomial of~$\widehat{b}$ is a combination of Homflypt polynomials of closures of simple braids, with coefficients in $\mathbb N[z,v]$ given by the product of the edge labels in the path going from each leaf to the root $b$. Collecting the leaves which correspond to the same simple braid, in this example we obtain
$$
 \begin{array}{rcl}
  P(\widehat{b}) & = & (1+z^2)\cdot  v^8 \cdot P(\widehat{I_4}) \\
  && \\
  & + &(z+z^3)\cdot v^7 \cdot P(\widehat{\sigma_2}) + (2z+z^3) \cdot v^7 \cdot  P(\widehat{\sigma_3}) \\
&& \\
   &   + & (2z^2+z^4) \cdot v^{6} \cdot P(\widehat{\sigma_2\sigma_3})+
(2z^2+z^4) \cdot v^{6} \cdot P(\widehat{\sigma_3\sigma_2}) \\
&& \\
& + &(z+3z^3+z^5) \cdot v^{5} \cdot P(\widehat{\sigma_3\sigma_2\sigma_3}).
 \end{array}
$$

\begin{figure}[ht!]
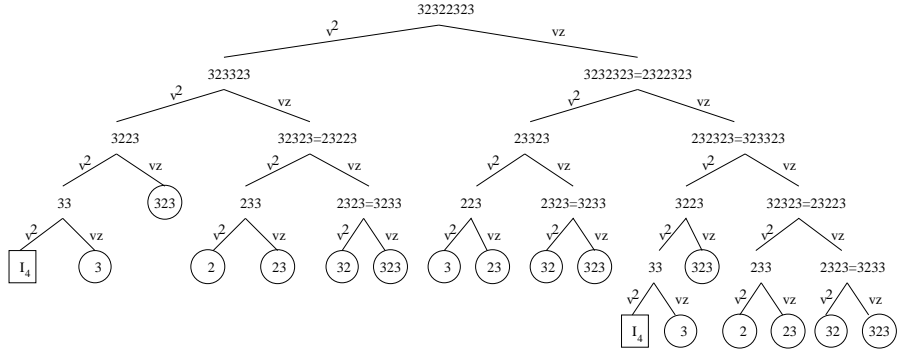

\begin{center}
\arbol
\end{center}
\caption{A simple resolution tree for the braid $b=32322323$}\label{EPP}
\end{figure}

We now prove that simple resolution trees always exist and that we can directly see, from a simple resolution tree, whether a closed positive braid reaches the MFW upper bound.

\begin{theorem}\label{tree}
Let $L=\widehat{b}$ be a link obtained as the closure of a positive braid~$b$ of $n$ strands and writhe $w$. Then $b$ admits a simple resolution tree and, moreover, the MFW upper bound is sharp for $L$, that is, $\partial_v^+(P(L))=w+n-1$, if and only if at least one leaf in this simple resolution tree is the identity braid.
\end{theorem}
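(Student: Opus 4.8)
The plan is to prove three things in turn: that a simple resolution tree of $b$ exists, that $P(L)$ expands as an explicit combination of the Homflypt polynomials of the leaves, and finally the claimed characterization of sharpness. For existence I would first establish the combinatorial fact quoted just before the statement, namely that a positive braid is simple if and only if it cannot be written as $P\sigma_i^2 Q$ with $P,Q$ positive. A positive word of length $w$ represents a permutation $\pi$ with $\ell(\pi)\le w$, and it is simple exactly when $\ell(\pi)=w$, i.e. when the word is reduced and each pair of strands crosses at most once. If $b=P\sigma_i^2Q$, the two strands occupying positions $i,i+1$ between the two occurrences of $\sigma_i$ cross twice, so $b$ is not simple; conversely, if $b$ is non-reduced then, by the word property for the symmetric group (Matsumoto--Tits) lifted to the positive braid monoid, applying only the braid relations $\sigma_i\sigma_j=\sigma_j\sigma_i$ and $\sigma_i\sigma_{i+1}\sigma_i=\sigma_{i+1}\sigma_i\sigma_{i+1}$ one reaches a positive word containing some $\sigma_i^2$, giving $b=P\sigma_i^2Q$. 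Existence of the tree then follows by induction on $w$: if $b$ is simple it is a single-node tree; otherwise write $b=P\sigma_i^2Q$ and recurse on the children $PQ$ and $P\sigma_iQ$, whose writhes $w-2$ and $w-1$ are strictly smaller, so the process terminates with every leaf a positive braid not of the form $P\sigma_i^2Q$, hence simple.

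Next I would read off the leaf expansion directly from the ramification rule. Each ramification replaces $P\sigma_i^2Q$ by $vz\cdot P\sigma_iQ+v^2\cdot PQ$ via the quadratic skein relation $\sigma_i^2=vz\sigma_i+v^2$, and this identity is preserved under closure. Hence, letting $\lambda$ run over the leaves, with $T_{\alpha_\lambda}$ the simple braid at $\lambda$ and $w_\lambda=\ell(\alpha_\lambda)$ its writhe, a path from the root to $\lambda$ using $a_\lambda$ left edges (label $v^2$, writhe drop $2$) and $c_\lambda$ right edges (label $vz$, writhe drop $1$) contributes the monomial $v^{2a_\lambda+c_\lambda}z^{c_\lambda}=v^{\,w-w_\lambda}z^{c_\lambda}$, with $c_\lambda\ge 0$. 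Therefore
\[
P(L)=\sum_{\lambda}v^{\,w-w_\lambda}\,z^{\,c_\lambda}\,P(\widehat{T_{\alpha_\lambda}}).
\]

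Finally I would control the top $v$-degree term by term using Proposition~\ref{MFWforSimples}. For each leaf the MFW upper inequality gives $\partial_v^+(P(\widehat{T_{\alpha_\lambda}}))\le w_\lambda+n-1$, so its contribution has $v$-degree at most $(w-w_\lambda)+(w_\lambda+n-1)=w+n-1$, with strict inequality unless $\alpha_\lambda=\mathrm{id}$. Thus if no leaf is the identity, every term has $v$-degree strictly below $w+n-1$ and the bound is not sharp. If at least one leaf is the identity, then $w_\lambda=0$ and the top coefficient of $P(\widehat{T_{\mathrm{id}}})$ is $(-z)^{1-n}$, so each identity leaf contributes $(-z)^{1-n}z^{c_\lambda}$ to the coefficient of $v^{w+n-1}$, while every non-identity leaf contributes $0$ to it. The crucial point is that these contributions all share the common factor $(-z)^{1-n}$ and differ only by the monomials $z^{c_\lambda}$ with $c_\lambda\ge 0$; hence they cannot cancel, and the coefficient of $v^{w+n-1}$ equals $(-z)^{1-n}\sum_{\lambda\ \mathrm{id}}z^{c_\lambda}\neq 0$. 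Together with the MFW upper inequality this forces $\partial_v^+(P(L))=w+n-1$. Since the left-hand condition is intrinsic to $L$, this equivalence also shows a posteriori that having an identity leaf is independent of the chosen simple resolution tree.

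The main obstacle I anticipate is the first step, the braid-combinatorial characterization of simple braids as the positive elements avoiding $P\sigma_i^2Q$: it is the only place where genuine structure of the positive braid monoid (reducedness of permutation words and the word property) enters, and some care is needed to argue that a non-reduced positive word can always be brought, using positive braid relations alone, to one exhibiting a $\sigma_i^2$. By contrast, once the leaf expansion is in hand the degree bookkeeping is routine, and the decisive observation — that the identity leaves all carry the same sign $(-z)^{1-n}$, so no cancellation can occur — is an immediate consequence of Proposition~\ref{MFWforSimples}.
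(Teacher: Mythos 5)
Your proposal is correct and follows essentially the same route as the paper: establish existence of the tree via the characterization of simple braids as positive braids not of the form $P\sigma_i^2Q$ (which the paper simply cites from Elrifai--Morton, whereas you sketch it), derive the leaf expansion $P(L)=\sum_\lambda z^{c_\lambda}v^{w-w_\lambda}P(\widehat{T_{\alpha_\lambda}})$, and conclude by Proposition~\ref{MFWforSimples} together with the observation that the identity-leaf contributions $z^{c_\lambda}(-z)^{1-n}$ all carry the same sign and hence cannot cancel. No gaps.
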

\begin{proof}
That any positive braid has a simple resolution tree follows from the following well known fact: a positive braid $\beta$ is not simple if and only if we can decompose it as $\beta=P\sigma_i^2 Q$, with $P$ and $Q$ positive braids (see, for example, Lemma 2.5 and following remark in \cite{MortonElrifai}). As relations in the braid group are homogeneous, the lengths of the braids $PQ$ and $P\sigma_i Q$ are strictly smaller than the length of~$\beta$. Therefore, starting with the root $b$, we can iteratively decompose every node which is not simple into two smaller nodes. Clearly this process terminates, yielding a simple resolution tree for $b$.

Let $T_{\alpha_1},\ldots,T_{\alpha_k}$ be the (not necessarily distinct) simple braids corresponding to the $k$ leaves of a simple resolution tree of $b$. For $i=1,\ldots,k$, let $z^{a_i}v^{w-l(\alpha_i)}$ be the monomial obtained by multiplying the edge labels of the path that goes from the leaf $T_{\alpha_i}$ to the root $b$. Note that $a_i$ is the number of right children in this path. Then
\begin{equation}\label{Eq}
P(L) = \sum_{i=1}^{k}{z^{a_i}\cdot v^{w-l(\alpha_i)} \cdot P(\widehat{T_{\alpha_i}})}.
\end{equation}

Since ${\rm wr}(T_{\alpha_i})=l(\alpha_i)$, by Proposition~\ref{MFWforSimples} the highest $v$-degree of each summand is $(w-l(\alpha_i))+({\rm wr}(T_{\alpha_i})+n-1)=w+n-1$ if and only if $\alpha_i={\rm id}$. Then, if no leaf is the identity braid, $\partial_v^+(P(L))<w+n-1$. Reciprocally, if at least one leaf of the simple resolution tree is trivial, then the coefficient of $v^{w-n+1}$ in $P(L)$ is, again by Proposition~\ref{MFWforSimples}, the sum of the monomials $z^{a_j}(-z)^{1-n}$ that correspond to $\alpha_j ={\rm id}$, a sum which is obviously nonzero.
\end{proof}

As stated in the introduction, it is well known that the MFW lower inequality is actually an equality for the closure of any positive braid (see for example~\cite{KalmanMeridian}, comment after Example 1$\cdot$8). Here we reprove this result by making use of Equation~(\ref{Eq}) derived from a simple resolution tree, and following the steps in the proof of Proposition~\ref{MFWforSimples}. Rather than give complete details, we prefer to explain some historical remarks about the positiveness of the Homflypt polynomial. Recall that a nonzero (Laurent) polynomial in $z$ is said to be {\it positive} if all its coefficients are nonnegative. Answering positively a question by V. F. R. Jones, Cromwell and Morton \cite{MortonCromwell} proved that, for positive links, the evaluation of the Homflypt polynomial $P(L)(v,z)$ in any $v \in (0,1)$ provides a positive Laurent polynomial in $z$. If $v=1$ we obtain the Conway polynomial, also positive except that it can be zero if the original link is split.

\begin{proposition}\label{PositiveBraidsMFWlowerbound}
Let $L=\widehat{b}$ be a link obtained as the closure of a positive braid~$b$ of $n$ strands and writhe $w$. Then the MFW lower inequality is sharp for~$L$, that is, $\partial_v^-(P(L))=w-n+1$. Moreover, the coefficient of $v^{w-n+1}$ in $P(L)$ is a positive Laurent polynomial in $z$.
\end{proposition}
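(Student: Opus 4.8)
The plan is to mirror the proof of Proposition~\ref{MFWforSimples}, but now tracking the \emph{lowest} $v$-degree instead of the highest, and to exploit Equation~(\ref{Eq}) coming from a simple resolution tree. The starting observation is that for the identity permutation $\mathrm{id}\in S_m$ we computed $P(\widehat{T_{\mathrm{id}}})=\delta^{m-1}$, whose lowest $v$-degree is exactly $1-m = (0)-m+1$, with leading (lowest) coefficient $z^{1-m}$, a positive Laurent monomial. More generally I claim that for every simple braid $T_\alpha$ on $m$ strands with $w'=l(\alpha)$ one has $\partial_v^-(P(\widehat{T_\alpha}))=w'-m+1$ with positive coefficient there; this is the $v$-lowest analogue of Proposition~\ref{MFWforSimples}, and I would prove it by the same induction on $m$, splitting into the cases $\alpha\in i(S_{m-1})$ and $\alpha\notin i(S_{m-1})$. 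In the first case $P(\widehat{T_\alpha})=\delta\,P(\widehat{T_{\alpha'}})$ and $\partial_v^-$ simply drops by one, matching $w'-m+1$; in the second case $\widehat{T_\alpha}=\widehat b$ for a braid $b$ on $m-1$ strands with writhe $w'-1$, so the MFW \emph{lower} inequality gives $\partial_v^-(P(\widehat b))\ge (w'-1)-(m-1)+1 = w'-m+1$, and one checks the bound is actually attained (the delicate point, discussed below).

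With that lemma in hand, the proposition for a general positive braid follows from Equation~(\ref{Eq}). Each summand $z^{a_i}v^{\,w-l(\alpha_i)}P(\widehat{T_{\alpha_i}})$ has lowest $v$-degree
$$
(w-l(\alpha_i)) + \bigl(l(\alpha_i)-n+1\bigr) = w-n+1,
$$
independently of which leaf $\alpha_i$ we look at. Thus every term contributes in the same lowest $v$-degree $w-n+1$, and no lower power of $v$ can appear, so $\partial_v^-(P(L))\ge w-n+1$; combined with the MFW lower inequality this is an equality provided the coefficient of $v^{w-n+1}$ does not vanish. That coefficient is $\sum_{i=1}^k z^{a_i}\cdot c_i$, where $c_i$ is the (positive, by the lemma) lowest coefficient of $P(\widehat{T_{\alpha_i}})$ and $a_i\ge 0$. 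Since each $z^{a_i}c_i$ is a positive Laurent polynomial in $z$ and the sum of positive polynomials is positive (no cancellation is possible), the total coefficient is a nonzero positive Laurent polynomial. This simultaneously proves sharpness and the positivity assertion, so the whole statement drops out of the single Equation~(\ref{Eq}) once the base lemma is established.

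The main obstacle is the base lemma, specifically showing that the MFW lower bound $w'-m+1$ is actually \emph{reached} (not merely a lower bound) for simple braids, together with positivity of the extremal coefficient. The inductive step for $\alpha\in i(S_{m-1})$ is automatic, but for $\alpha\notin i(S_{m-1})$ the inequality coming from MFW only gives one direction; one needs an independent argument that equality holds. I expect the cleanest route is to invoke the positivity result of Cromwell and Morton cited just above the proposition: for a positive link the coefficient of the lowest $v$-power is, up to the known normalization, a positive (hence nonzero) Laurent polynomial in $z$, which forces $\partial_v^-$ to equal the MFW lower bound and hands us positivity of the extremal coefficient for free. Alternatively, one can run the \emph{entire} induction directly on Equation~(\ref{Eq}) for an arbitrary positive braid (as the paper suggests, ``following the steps in the proof of Proposition~\ref{MFWforSimples}''), avoiding a separate simple-braid lemma: the two cases $T_\alpha\sigma_i$ simple versus non-simple control how the lowest $v$-degree and its positive coefficient propagate, and positivity of the coefficients (all edge labels lie in $\mathbb N[z,v]$) guarantees that the contributions in degree $w-n+1$ never cancel.
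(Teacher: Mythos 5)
Your overall architecture --- a lemma giving sharpness and positivity of the extremal coefficient for closures of simple braids, followed by Equation~(\ref{Eq}) --- is exactly the paper's, and your isolation of the single ``delicate point'' is accurate. But neither of your two proposed ways of closing that point actually closes it. The Cromwell--Morton theorem, as cited just before the proposition, asserts positivity of the Laurent polynomial $P(L)(v_0,z)$ for each fixed $v_0\in(0,1)$; positivity of an evaluation does not imply positivity (or even nonvanishing) of the coefficient of the single lowest power of $v$, so invoking it here either overstates the cited result or is circular, since positivity of that extremal coefficient is precisely what the proposition asserts. Your second alternative names the wrong case analysis (the dichotomy ``$T_\alpha\sigma_i$ simple versus non-simple'' belongs to the proof of K\'alm\'an's theorem, not to Proposition~\ref{MFWforSimples}) and, more importantly, one cannot ``avoid a separate simple-braid lemma'': Equation~(\ref{Eq}) reduces a positive braid on $n$ strands to simple braids on the \emph{same} number of strands, so the simple-braid case must still be treated somehow.

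The paper's resolution is to strengthen the inductive statement so that it covers \emph{all} positive braids, not just simple ones, and to induct on the number of strands. Then, in your troublesome case $\alpha\notin i(S_{m-1})$, the braid $d$ with $\widehat{T_\alpha}=\widehat d$ is a positive (generally non-simple) braid on $m-1$ strands and writhe $w'-1$, so the outer induction hypothesis applies to it directly and yields both sharpness, since $(w'-1)-(m-1)+1=w'-m+1$, and positivity of the extremal coefficient; no appeal to the MFW lower inequality or to Cromwell--Morton is needed. The two levels interleave: the result for positive braids on $m-1$ strands gives it for simple braids on $m$ strands (via the two cases of Proposition~\ref{MFWforSimples}), which in turn gives it for all positive braids on $m$ strands via Equation~(\ref{Eq}), exactly as in your final paragraph. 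With that single adjustment --- phrase the induction for arbitrary positive braids and drive it by the number of strands --- your argument becomes the paper's proof.
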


\begin{proof}
Following a double induction, first on the number of strands and then on the writhe, we will see that the coefficient $q_L(z)$ of $v^{w-n+1}$ in $P(L)(v,z)$ is a positive Laurent polynomial in the variable $z$. If $n=1$, then $w=0$ and the closure of the braid is the trivial knot with polynomial $1$, so the result holds.

For $n>1$ we follow the steps in the proof of Proposition~\ref{MFWforSimples} to see first that the result is true for any simple braid $T_\alpha$ with $n$ strands. If $\alpha \in S_n \setminus i(S_{n-1})$ then $\widehat{T_\alpha}=\widehat{d}$ for a positive braid $d$ with $n-1$ strands and writhe $w-1$, as given in the proof of Proposition~\ref{MFWforSimples}. Thus $P(\widehat{T_\alpha})=P(\widehat{d})$ and $(w-1)-(n-1)+1=w-n+1$. Since $d$ is positive (although non-simple) and has less than $n$ strands, induction can be applied. If $\alpha \in i(S_{n-1})$ then $\alpha =\alpha' \otimes 1$ and $P(\widehat{T_\alpha})=\delta P(\widehat{T_{\alpha '}})$ where $T_{\alpha '}$ is simple, with the same writhe as $T_\alpha$ and one less strand (again, see the proof of Proposition~\ref{MFWforSimples}). Clearly, $q_L(z)=\frac{1}{z}q_{\widehat{T_{\alpha '}}}(z)$, so the result holds for every simple braid with $n$ strands.

Finally, once we have proved the result for the closure of simple braids with $n$ strands, the result for a positive braid with $n$ strands follows from considering Equation~(\ref{Eq}), derived from a simple resolution tree.
\end{proof}

According to Proposition~\ref{PositiveBraidsMFWlowerbound}, for closures of positive braids the MFW inequality is sharp if and only if the MFW upper bound is reached. Then the following result is a nice consequence of Theorem~\ref{tree}:

\begin{corollary}\label{transformations}
The MFW inequality is sharp for a closed positive braid if and only if one (hence all) of its braid word representatives can be obtained from the empty word by a finite sequence of transformations of the following types:
\begin{enumerate}
  \item Inserting $\sigma_i^2$ for some $i=1, \dots , n-1$,
  \item doubling a letter $\sigma_i$ for some $i=1, \dots , n-1$, and
  \item applying positive braid relations.
\end{enumerate}
\end{corollary}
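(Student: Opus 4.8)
The plan is to combine Theorem~\ref{tree} and Proposition~\ref{PositiveBraidsMFWlowerbound} with a careful dictionary between the branchings of a simple resolution tree and the three word transformations. By Proposition~\ref{PositiveBraidsMFWlowerbound} the MFW lower bound is always attained for a closed positive braid, so the MFW inequality is sharp for $L=\widehat b$ if and only if the MFW upper bound is attained; by Theorem~\ref{tree} this happens if and only if some simple resolution tree of $b$ has the identity braid (the empty word) as one of its leaves. The corollary therefore reduces to the equivalence between ``$b$ admits a simple resolution tree with the empty word as a leaf'' and ``some word representative of $b$ can be produced from the empty word by transformations of types 1--3''.

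First I would read off the correspondence. A branching sends a parent $P\sigma_i^2Q$ to a left child $PQ$ and a right child $P\sigma_iQ$; read from child to parent this is precisely an insertion of $\sigma_i^2$ (type~1) in the left case and a doubling of the letter $\sigma_i$ (type~2) in the right case. Since the decomposition $\beta=P\sigma_i^2Q$ provided by the non-simplicity criterion (Lemma 2.5 of~\cite{MortonElrifai}) is an identity of braids rather than of words, exposing the relevant $\sigma_i^2$ inside a chosen word for $\beta$ may require positive braid relations, which is exactly what transformations of type~3 supply. Tracking an explicit word at every node, I would record the sequence of moves met along the path joining the distinguished identity leaf to the root $b$: the words visited start from the empty word and end at a representative of $b$, and each step is an insertion, a doubling, or a braid relation. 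This gives the forward implication.

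For the converse I would reverse a given sequence. If a representative $W$ of $b$ is obtained from the empty word, then reading the sequence backwards turns $W$ into the empty word, where the reverse of an insertion of $\sigma_i^2$ is a deletion (a left-child resolution), the reverse of a doubling is the replacement $\sigma_i^2\mapsto\sigma_i$ (a right-child resolution), and a braid relation reverses to a braid relation. This backward sequence is exactly a root-to-leaf path of resolutions (allowing braid relations between successive decompositions, as in the construction of Theorem~\ref{tree}) that terminates at the identity braid. I would then extend this single path to a full simple resolution tree of $b$: at each sibling branching off the path, resolve that sibling arbitrarily into simple leaves, which is possible by the first half of Theorem~\ref{tree}. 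The resulting simple resolution tree has the empty word among its leaves, so the MFW upper bound, and hence the MFW inequality, is sharp.

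The parenthetical ``(hence all)'' then follows from the standard fact that any two positive words representing the same positive braid differ by a finite sequence of positive braid relations; thus if one representative is reachable from the empty word, every other one is reachable too, by appending type~3 transformations. I expect the main delicate point to be precisely this word-versus-braid bookkeeping in the forward direction: making rigorous that the braid-level decompositions of the resolution tree can be realised on words at the cost of interposing braid relations, and checking that this does not disturb the correspondence between the tree's branchings and the transformations of types 1 and 2.
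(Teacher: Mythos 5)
Your proposal is correct and follows essentially the same route as the paper's proof: both directions rest on the dictionary between left/right children of a simple resolution tree and transformations of types 1--2 (with type 3 absorbing the word-level bookkeeping), combined with Theorem~\ref{tree} and Proposition~\ref{PositiveBraidsMFWlowerbound}. The paper's argument is just a terser version of yours, omitting the explicit discussion of extending the reversed path to a full simple resolution tree and the word-versus-braid care you spell out.
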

\begin{proof}
Starting with the empty word, a sequence of the above transformations builds a branch of a simple resolution tree for the corresponding positive braid. Since the leaf of this branch is the identity, the MFW upper bound is sharp according to Theorem~\ref{tree}.

Reciprocally, suppose that $L=\widehat{b}$ reaches the MFW upper bound, and construct a simple resolution tree for $b$. By Theorem~\ref{tree} at least one of its leaves is the identity. Ascending in the tree from such a leaf provides the sequence of transformations of the above types which define a braid word for $b$.
\end{proof}

We now enumerate some examples which can be deduced from Corollary~\ref{transformations}:

\begin{corollary}\label{listado}
Let $w$ be a positive word representing a braid $b$. Then the MFW inequality is sharp for the oriented link $L=\widehat{b}$ if the word $w$ is in the following list:
\begin{enumerate}
  \item Words which are product of positive powers of the generators, where all the exponents are greater than or equal to~two, that is, $w=\prod_k \sigma_{i_k}^{e_k}$ with $e_k \geq 2$ for all $k$. For example, $\sigma_3^2\sigma_2^5\sigma_1^2\sigma_2^3\sigma_3^3$.
\item Even positive palindromic braid words, that is, positive words with an even number of letters that reads the same backwards as forwards. For example, $\sigma_3\sigma_2\sigma_1^2\sigma_2\sigma_3$.
\item Any word of the form $uw_0v$ where $u, v$ are positive words and $w_0$ is any positive word representing the square of the half twist $\Delta \in B_n$.
\end{enumerate}
\end{corollary}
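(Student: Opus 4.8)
The plan is to prove all three cases uniformly through Corollary~\ref{transformations}: it suffices, for each word $w$ in the list, to exhibit a braid word representing $b$ that can be produced from the empty word using only the three transformations (inserting a square $\sigma_i^2$, doubling a letter, and applying positive braid relations). For cases~1 and~2 I would produce the given word $w$ itself; for case~3 I would build a convenient word for the same braid.

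For case~1, write $w=\sigma_{i_1}^{e_1}\cdots\sigma_{i_r}^{e_r}$ with every $e_k\geq 2$ and build it left to right: starting from the empty word, insert $\sigma_{i_1}^2$ and double its last letter $e_1-2$ times to reach $\sigma_{i_1}^{e_1}$; then append $\sigma_{i_2}^2$ and double to reach $\sigma_{i_2}^{e_2}$, and so on. For case~2, an even palindrome of length $2m$ has the nested shape $w=w_1w_2\cdots w_m w_m\cdots w_2 w_1$, since positions $j$ and $2m+1-j$ carry the same generator. I would build it from the centre outward: insert $w_1^2$, and then repeatedly insert $w_k^2$ into the exact middle of the current word, so that after the $k$-th insertion the word reads $w_1\cdots w_k w_k\cdots w_1$; after $m$ insertions one obtains $w$. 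Both constructions use only transformations of types~1 and~2, so Corollary~\ref{transformations} applies directly.

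Case~3 is the core of the argument, and the strategy is to build $\Delta^2$ first and then let the full twist absorb $u$ and $v$ one letter at a time. For the first step I would invoke the classical fact $\Delta^*=\Delta$: if $\rho$ is any positive word for $\Delta$, then $\rho^*$ is also a word for $\Delta$, so $\Delta^2=\rho\rho^*$, and $\rho\rho^*$ is an even palindromic word (its reversal is $(\rho^*)^*\rho^*=\rho\rho^*$); hence $\Delta^2$ is buildable by case~2. For the second step I would use that every generator $\sigma_j$ is both a prefix and a suffix of $\Delta$, the standard Garside property that $\Delta$ is the least common multiple of the generators, so $\Delta=\sigma_j C_j=B_j\sigma_j$ with $C_j,B_j$ positive. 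This gives two absorption moves. From a word $\Delta^2 Q$ one reaches $\Delta^2\sigma_j Q$ by rewriting $\Delta^2 Q=\Delta B_j\sigma_j Q$, doubling the exposed $\sigma_j$, and rewriting $\Delta B_j\sigma_j^2 Q=\Delta^2\sigma_j Q$; symmetrically, from $P\Delta^2$ one reaches $P\sigma_j\Delta^2$ using the prefix decomposition. Applying the suffix move repeatedly, reading $v$ from its last letter to its first, builds $\Delta^2 v$ out of $\Delta^2$; applying the prefix move repeatedly, reading $u$ from its last letter to its first, then builds $u\Delta^2 v$. Since $w=uw_0v$ represents this braid $b$, Corollary~\ref{transformations} yields that the MFW inequality is sharp for $\widehat{b}$.

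The main obstacle is case~3, and within it the two structural inputs about the half twist: that $\Delta^2$ admits an even palindromic positive word, which I reduce to the reversibility $\Delta^*=\Delta$, and that every generator appears as both a prefix and a suffix of $\Delta$, which is precisely what allows the full twist to swallow the arbitrary positive words $u$ and $v$ by successive doublings. Cases~1 and~2, by contrast, are immediate applications of the square-insertion and doubling moves.
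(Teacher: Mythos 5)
Your proof is correct and follows essentially the same route as the paper: cases 1 and 2 via square-insertion and doubling, and case 3 by writing $\Delta^2$ as an even palindrome (a positive word for $\Delta$ followed by its reverse) and then absorbing $u$ and $v$ one letter at a time using the fact that every generator occurs as both a prefix and a suffix of $\Delta$. The only quibble is a bookkeeping slip: since your prefix move inserts the new letter immediately to the left of $\Delta^2$, the letters of $u$ must be fed in from first to last rather than last to first.
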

\begin{proof}
Words in the first item can be obtained by a finite number of transformations of type 1 and 2 in Corollary~\ref{transformations}. Words in the second item can be obtained by a finite number of transformations of type 1 in Corollary~\ref{transformations}.

To prove the statement for words in the third item, we first recall that the half twist or Garside element $\Delta \in B_n$ can be represented by two words which are the reverse of each other:
\begin{eqnarray*}
   \Delta & = & \sigma_1(\sigma_2\sigma_1)\cdots (\sigma_{n-2}\cdots \sigma_1)(\sigma_{n-1}\cdots \sigma_1) \\
          & = &  (\sigma_1\cdots \sigma_{n-1})(\sigma_1\cdots \sigma_{n-2}) \cdots (\sigma_1\sigma_2)\sigma_1
\end{eqnarray*}
This means that $\Delta^2$ can be represented by an even positive palindromic braid word $w_0$, known already to be in the list. Note that any other positive word representing $\Delta^2$ is also in the list, since it can be obtained from $w_0$ by positive braid relations (transformation of type 3 in Corollary~\ref{transformations}).

The half twist $\Delta$ can be represented by a positive word ending (or starting) with any generator $\sigma_i$~\cite{MortonElrifai}. It follows easily the same for $w_0$; by positive braid relations $w_0$ can be transformed into a positive word $w_i'$ (resp. $w_i$) that starts (resp. ends) with $\sigma_i$. Then, if $v=\sigma_{i_1}\cdots \sigma_{i_k}$, by positive braid relations we transform $w_0$ into $w_{i_k}$, and then double the last letter $\sigma_{i_k}$ by a transformation of type 2. Next we apply positive braid relations to transform $w_{i_k}$ into $w_{i_{k-1}}$, and double its last letter to obtain $w_{i_{k-1}}\sigma_{i_{k-1}}\sigma_{i_k}$. Iterating this process, we finally obtain $w_{i_1}\sigma_{i_1}\sigma_{i_2}\cdots \sigma_{i_k}$, which by positive braid relations can be transformed into $w_0v$. Finally we repeat the whole process on the left, using the equivalent words $w_i'$, to obtain $uw_0v$.
\end{proof}

It is probably worth to rewrite the last item in Corollary~\ref{listado} (which was already shown in~\cite[Corollary 2.4]{FW}) with other words:
\begin{corollary}\label{Delta2C}
Let $a, b$ be two positive braids. Then the MFW inequality is sharp for the closure of the braid $a\Delta^2 b$.
\end{corollary}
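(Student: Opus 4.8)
The plan is to read this off directly from the third item of Corollary~\ref{listado}. First I would choose positive word representatives $u$ and $v$ for the positive braids $a$ and $b$, which exist by hypothesis, together with any positive word $w_0$ representing $\Delta^2$. Then the concatenation $uw_0v$ is a positive word representing $a\Delta^2 b$, and it is exactly of the shape $uw_0v$ appearing in item 3 of Corollary~\ref{listado}. Applying that corollary immediately gives that the MFW inequality is sharp for the closure of $uw_0v$, which is precisely $\widehat{a\Delta^2 b}$.

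The one thing I would make explicit is that MFW sharpness does not depend on the chosen word but only on the braid (equivalently, on its closure). This is exactly what Corollary~\ref{transformations} records when it states that the characterization holds for one --- hence all --- braid word representatives. Consequently the particular choices of $u$, $v$, and $w_0$ are irrelevant, and the conclusion holds for the braid $a\Delta^2 b$ itself.

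I expect no genuine obstacle here, since all the substance is already contained in Corollary~\ref{listado}: the statement is essentially a restatement phrased in terms of braids rather than words. Should a reader prefer an argument resting directly on Corollary~\ref{transformations}, I would instead build $a\Delta^2 b$ from the empty word --- assembling the even palindrome $w_0$ by type-1 insertions, and then growing the words for $a$ and $b$ on each side via the doubling-and-commuting trick from the proof of Corollary~\ref{listado}, repeatedly rewriting $\Delta^2$ so that it ends (or starts) with the next required generator and then doubling that letter. The only mildly delicate input there is the fact, due to~\cite{MortonElrifai}, that $\Delta$ (and hence $w_0$) can be written as a positive word beginning or ending with any prescribed generator $\sigma_i$.
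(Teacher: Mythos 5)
Your proposal is correct and matches the paper exactly: the paper presents Corollary~\ref{Delta2C} with no separate proof, explicitly describing it as item~3 of Corollary~\ref{listado} ``with other words,'' which is precisely the reduction you carry out. Your added remark on word-independence (via Corollary~\ref{transformations}) and the optional direct construction are consistent with, but not needed beyond, what the paper intends.
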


Using the terminology from Garside theory~\cite{MortonElrifai}, the above result means that the MFW inequality is sharp for positive braids of {\it infimum} at least two. Therefore, the MFW inequality can be strict only for positive braids whose infimum is zero or one.

Recall the celebrated lower bound for the braid index $s(L)$ of an oriented link~$L$, defined as
$$
  MFW(L)=\frac{\textnormal{span}_v(P(L))}{2}+1=\frac{(\partial_v^+(P(L))-\partial_v^-(P(L)))}{2}+1
$$
(see \cite{Morton}, \cite{FW}). In \cite{FW} Franks and Williams conjectured that, for a link which is closure of a positive braid, $MFW(L)=s(L)$. In \cite{MortonShort} Morton and Short showed a counterexample: for $L=\widehat{b}$ with $b=\sigma_{3}\sigma_{2}\sigma_{1}\sigma_{3}\sigma_{2}^2\sigma_{1}\sigma_{3}\sigma_{2}^2\sigma_{1}\sigma_{3}\sigma_{2} \in B_4$ we have $MFW(L)=3$ and $s(L)=4$. However, it is known  that $MFW(L)=2$ if and only if $s(L)=2$, if and only if $L$ is a torus link $T(2,n)$ for certain $n\geq 2$ (see \cite[Theorem 1.2]{Nakamura}). We prove the following result:

\begin{proposition}
Let $L$ be an oriented link. Then $MFW(L)=s(L)$ if there exists a positive braid $b$ with $L=\widehat{b}$ and $b$ admits a simple resolution tree where at least one leaf is the identity braid. In this case, if $b\in B_n$, then $s(L)=n$.
\end{proposition}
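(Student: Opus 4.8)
The plan is to combine the two sharpness results proved earlier to compute $MFW(L)$ exactly, and then to squeeze the braid index $s(L)$ between $MFW(L)$ and $n$.

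First I would invoke Theorem~\ref{tree}. Since $b$ is a positive braid on $n$ strands with writhe $w$ that, by hypothesis, admits a simple resolution tree with at least one leaf equal to the identity braid, the MFW upper bound is sharp, i.e. $\partial_v^+(P(L))=w+n-1$. Next I would invoke Proposition~\ref{PositiveBraidsMFWlowerbound}, which guarantees that for the closure of \emph{any} positive braid the MFW lower bound is automatically sharp, so that $\partial_v^-(P(L))=w-n+1$ with no further hypothesis beyond positivity.

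Combining these two equalities, the $v$-span of $P(L)$ is $(w+n-1)-(w-n+1)=2(n-1)$, and therefore
$$
MFW(L)=\frac{\textnormal{span}_v(P(L))}{2}+1=(n-1)+1=n.
$$
Finally I would close the argument with the MFW inequality $MFW(L)\leq s(L)$ together with the trivial bound $s(L)\leq n$, valid because $L$ is the closure of a braid on $n$ strands. These give $n=MFW(L)\leq s(L)\leq n$, forcing $s(L)=n=MFW(L)$, which is exactly the claim.

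I do not anticipate any genuine obstacle: the statement is a direct corollary of Theorem~\ref{tree} and Proposition~\ref{PositiveBraidsMFWlowerbound}, and the only thing to check is the arithmetic of the span. The one point worth stating explicitly in the write-up is that Proposition~\ref{PositiveBraidsMFWlowerbound} already supplies the lower-bound sharpness for all positive-braid closures, so that the hypothesis about the identity leaf is needed only to secure the upper bound via Theorem~\ref{tree}.
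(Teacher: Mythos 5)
Your proposal is correct and follows exactly the paper's own argument: apply Theorem~\ref{tree} for the upper bound, Proposition~\ref{PositiveBraidsMFWlowerbound} for the lower bound, conclude $MFW(L)=n$, and squeeze $s(L)$ via $MFW(L)\leq s(L)\leq n$. Nothing is missing.
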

\begin{proof}
Assume that $L=\widehat{b}$ where $b$ is a positive braid with $n$ strands and writhe $w$, and the identity braid with $n$ strands is one of the leaves of a simple resolution tree of $b$. By Proposition~\ref{PositiveBraidsMFWlowerbound} we have $\partial_v^-(P(L))=w-n+1$ and by Theorem~\ref{tree} $\partial_v^+(P(L))=w+n-1$. In particular $MFW(L)=n$. Since $MFW(L) \leq s(L) \leq n$, the results follows.
\end{proof}

Even if we restrict our attention to the oriented links which are closed positive braids, the converse result is not clear to us, since there are oriented links which are closures of positive braids, but with no positive braid representations of minimal number of strands \cite[Theorem 1]{Stoimenow}. The example exhibited by Stoimenow has braid index $s(L)=4$. We do not know if there are examples with $s(L)=3$.

\section{Positive braids on three strands}\label{3braidS}

We end this paper with a study of positive braid words on three strands. More precisely, we will study the braid index of their closures.

Clearly, the links of braid index one and two are precisely the trivial knot and the torus links $T(2,k)$ for $k\in \mathbb Z\setminus\{-1,1\}$. It is well known (see~\cite{BM93} and also~\cite[Theorem 1.1]{BM08}) that a  braid with three strands closes to a link whose braid index is smaller than three (one of the above) if and only if it is conjugate to $\sigma_1^k\sigma_2^{\pm 1}$ for some $k\in \mathbb Z$. Hence, knowing how to solve the conjugacy problem in $B_3$ one can determine the braid index of a closed braid with three strands.

The next result, which uses the techniques introduced in this paper, avoids the need to use the conjugacy problem in the case of {\it positive} braids on three strands, as we give a complete list of positive words whose closures have braid index smaller than three.

\begin{theorem}~\label{3braidsT}
Let $w$ be a positive word in $\sigma_1, \sigma_2$, and let $b$ be the braid on three strands represented by $w$. Then the braid index of $\widehat{b}$ is smaller than three if and only if $w$ is, up to cyclic permutation, one of the following words:
\begin{enumerate}

 \item $\sigma_1\sigma_2^p$\quad or \quad  $\sigma_2 \sigma_1^{p}$, \quad  for $p\geq 0$.

 \item $\sigma_1\sigma_2\sigma_1^p\sigma_2^q$  \quad or \quad $\sigma_2\sigma_1 \sigma_2^p \sigma_1^q$, \quad for $p,q>0$.
\end{enumerate}
\end{theorem}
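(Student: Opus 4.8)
The plan is to prove both directions by reducing everything to the combinatorial characterization of MFW-sharpness already established. The key observation is that for a positive braid $b \in B_3$, the closure $\widehat{b}$ has braid index smaller than three if and only if $MFW(\widehat{b}) < 3$, which (since the lower bound is always reached by Proposition~\ref{PositiveBraidsMFWlowerbound}) is equivalent to the MFW upper bound being \emph{not} sharp, i.e. $\partial_v^+(P(\widehat{b})) < w+2$. Indeed, the displayed remarks before Theorem~\ref{3braidsT} recall that the only links of braid index one or two are the trivial knot and the torus links $T(2,k)$, and that a three-strand braid closes to such a link precisely when $MFW < 3$; so the braid index question is identical to an MFW question, for which we have Theorem~\ref{tree} at our disposal.

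Given this reduction, the strategy is to compute, for each candidate word on the list, a simple resolution tree and check whether the identity braid $I_3$ appears as a leaf. First I would establish the \emph{easy direction}: for each of the families in items (1) and (2), I would exhibit the Homflypt polynomial (or more simply, directly verify that the closure is one of the small-braid-index links). For item (1), the braids $\sigma_1\sigma_2^p$ and $\sigma_2\sigma_1^p$ are (after destabilizing the strand that meets only a single $\sigma_1$ or $\sigma_2$) equal to closures of two-strand braids $\sigma^p$, hence their closures are torus links $T(2,p)$, which have braid index at most two by the recalled result. For item (2), a similar but slightly more involved Markov reduction should show the closures again reduce to two-strand torus links; I expect $\sigma_1\sigma_2\sigma_1^p\sigma_2^q$ to destabilize after using the braid relation to isolate a once-crossed strand. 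Equivalently, by Theorem~\ref{tree} one checks that \emph{no} simple resolution tree of these words has $I_3$ as a leaf, confirming $MFW < 3$.

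The substantial direction is the \emph{converse}: showing that every positive word whose closure has braid index less than three is, up to cyclic permutation, on the list. For this I would argue contrapositively and by the structure of simple resolution trees. Since positive braids on three strands have an explicit Garside/normal-form structure, I would analyze a positive word $w$ by how many generators appear and in what pattern, invoking Corollary~\ref{transformations}: braid index three fails exactly when no braid word for $b$ can be built from the empty word by inserting squares, doubling letters, and applying braid relations ending at $I_3$. Concretely I would classify positive words on $\{\sigma_1,\sigma_2\}$ up to cyclic permutation and positive braid relations, isolating those whose normal form contains no $\sigma_i^2$ factor forced by $\Delta^2 = (\sigma_1\sigma_2)^3$; by Corollary~\ref{Delta2C}, any word of infimum at least two is automatically MFW-sharp (braid index three), so only words of infimum zero or one can have smaller braid index, and these have a very restricted normal form that I would show coincides exactly with the two families listed.

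The main obstacle I anticipate is the converse classification: enumerating, up to cyclic permutation and braid relations, all positive three-strand words of infimum zero or one and verifying that those not on the list must contain a configuration (such as an interior $\sigma_1\sigma_2\sigma_1\sigma_2$ block or two separated higher powers) forcing a $\Delta^2$ factor or otherwise forcing an $I_3$ leaf in every simple resolution tree. Making this case analysis both exhaustive and non-redundant is delicate, because cyclic permutation and the braid relation $\sigma_1\sigma_2\sigma_1 = \sigma_2\sigma_1\sigma_2$ interact nontrivially. I would organize the argument by the total number of occurrences of, say, the syllable $\sigma_1\sigma_2$ (equivalently the infimum), using the left normal form in $B_3$ to reduce each word to a canonical representative and then matching it against items (1) and (2). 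The positivity and the homogeneity of braid relations, together with the already-proved fact that the lower MFW bound is always attained, should keep the bookkeeping tractable once the normal-form framework is set up.
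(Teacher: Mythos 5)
Your reduction of the braid-index question to MFW-sharpness, and your reliance on Theorem~\ref{tree}, Corollary~\ref{transformations} and Corollary~\ref{Delta2C}, is exactly the framework the paper uses, and your treatment of the easy direction (destabilizing the listed words to two-strand braids; for item (2) the concrete mechanism is $\sigma_1\sigma_2\sigma_1^p\sigma_2^q=\sigma_2\sigma_1\sigma_2\sigma_1^{p-1}\sigma_2^q\sim\sigma_1\sigma_2\sigma_1^{p-1}\sigma_2^{q+1}\sim\cdots\sim\sigma_1\sigma_2^{p+q+1}$) is sound. One caution: if you instead argue the easy direction by showing $MFW<3$, you need the nontrivial input that $MFW(L)=s(L)$ for closures of positive $3$-braids (\cite[Proposition 3.1]{Nakamura}); this does not follow from the remarks preceding the theorem, which only recall the Birman--Menasco conjugacy criterion. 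The destabilization route avoids this entirely, and for the other direction only the trivial inequality $MFW(L)\le s(L)\le 3$ is needed.

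The genuine gap is in the converse, which you leave as a plan rather than a proof, and the one concrete organizing claim you make about it is false: positive words of infimum zero or one do \emph{not} ``coincide exactly with the two families listed.'' For instance $\sigma_1^3$ and $\sigma_1^2\sigma_2^2$ have infimum zero, are not on the list (even up to cyclic permutation), and are MFW-sharp; the mechanism there is insertion of $\sigma_i^2$ followed by doubling, not a hidden $\Delta^2$ factor. So restricting to infimum $\le 1$ does not by itself restrict to the listed families, and the real content is the exhaustive case analysis of the remaining words. The paper organizes this not by Garside normal form but by the syllable decomposition $w=\sigma_1^{e_1}\sigma_2^{e_2}\cdots\sigma_1^{e_{2k-1}}\sigma_2^{e_{2k}}$ after a cyclic permutation: for $k\ge 3$ the word is obtained from $(\sigma_1\sigma_2)^3\sigma_1^{e_7}\cdots\sigma_2^{e_{2k}}=\Delta^2\sigma_1^{e_7}\cdots\sigma_2^{e_{2k}}$ by doubling letters; for $k=1$ either some $e_i=1$ (on the list, destabilizes) or both $e_i\ge 2$ (Corollary~\ref{listado}); and for $k=2$ the only case settled neither by the list nor by Corollary~\ref{listado} is, up to symmetry, $\sigma_1\sigma_2^{e_2}\sigma_1^{e_3}\sigma_2^{e_4}$ with $e_2,e_4>1$, which requires an explicit branch of a simple resolution tree (produce $\sigma_1^2$, insert $\sigma_2^2$ twice to reach $\sigma_1\sigma_2^2\sigma_1\sigma_2^2$, then double letters). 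Until you carry out a completeness argument of this kind, covering in particular that last case, the ``only if'' direction is not proved.
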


\begin{proof}
It is known~\cite[Proposition 3.1]{Nakamura} that if $L$ is the closure of a positive braid~$b$ on $n=3$ strands, then $MFW(L)=s(L)$, the braid index of $L$. And clearly, for braids on three strands, that the MFW inequality is sharp means exactly that $MFW(L)=3$. We now examine the different possibilities.

If $w$ is the trivial word the result holds trivially, as the trivial link with three components has braid index three.

Suppose that $w=\sigma_i^k$ only involves one of the generators $\sigma_1$ or $\sigma_2$. If $k=1$, $w$ is in the list above and clearly the braid index of $\widehat{b}$ is two. If $k>1$ then $w$ is not in the list (even considering cyclic permutation) and $w$ can be obtained by inserting $\sigma_i^2$ and then doubling $\sigma_i$ as many times as needed. By Corollary~\ref{transformations}, the result follows.

We can then assume that $w$ involves $\sigma_1$ and $\sigma_2$ and, after a cyclic permutation of its letters, that there are exponents $e_i>0$ for $i=1,\ldots,2k$ with
$$
   w= \sigma_1^{e_1}\sigma_2^{e_2}\sigma_1^{e_3}\sigma_2^{e_4}\cdots \sigma_1^{e_{2k-1}}\sigma_2^{e_{2k}}.
$$

Suppose $k\geq 3$. We will produce $w$ from the trivial word going up in a simple resolution tree (that is, applying the transformations from Corollary~\ref{transformations}). By Corollary~\ref{Delta2C}, we can produce $\Delta^2\sigma_1^{e_7}\sigma_2^{e_8}\cdots \sigma_1^{e_{2k-1}}\sigma_2^{e_{2k}}$. That is, we can produce $\sigma_1\sigma_2\sigma_1\sigma_2\sigma_1\sigma_2\sigma_1^{e_7}\sigma_2^{e_8}\cdots \sigma_1^{e_{2k-1}}\sigma_2^{e_{2k}}$. Now doubling the letters in $\sigma_1\sigma_2\sigma_1\sigma_2\sigma_1\sigma_2$ as many times as needed, one obtains $w$. This implies, from Corollary~\ref{transformations}, that if $k\geq 3$ the braid index of $b$ is three.

It remains to study the cases $k=1$ and $k=2$.

Suppose $k=1$, so $w=\sigma_1^{e_1}\sigma_2^{e_2}$. If both $e_i>1$ the word is not in the list and the braid index is three by Corollary~\ref{listado}, so we can assume that either $e_1=1$ or $e_2=1$. If $e_1=1$ then $w=\sigma_1\sigma_2^{e_2}$, which clearly has braid index smaller than three, as it corresponds to a stabilization of a braid on two strands. The same happens if $e_2=1$, in which case $w=\sigma_1^{e_1}\sigma_2$ is equivalent to $\sigma_2\sigma_1^{e_1}$ up to cyclic permutation of its letters.

Suppose finally that $k=2$, so $w=\sigma_1^{e_1}\sigma_2^{e_2}\sigma_1^{e_3}\sigma_2^{e_4}$. Let us suppose that $w=\sigma_1\sigma_2\sigma_1^{p}\sigma_2^{q}$ with $p,q>0$. Then $b=\sigma_1\sigma_2\sigma_1^p\sigma_2^q = \sigma_2\sigma_1\sigma_2\sigma_1^{p-1}\sigma_2^q$ which is conjugate to $\sigma_1\sigma_2\sigma_1^{p-1}\sigma_2^{q+1}$. Repeating this process, we see that $b$ is conjugate to $\sigma_1\sigma_2 \sigma_1^{0}\sigma_2^{p+q} = \sigma_1\sigma_2^{p+q+1}$, so $\widehat{b}$ has braid index smaller than three, by the previous case. Similarly, if $w=\sigma_2\sigma_1\sigma_2^p\sigma_1^q$ with $p,q>0$, the braid index of $\widehat{b}$ is smaller than three.

We know from Corollary~\ref{listado} that if $e_i>1$ for $i=1, 2, 3, 4$ the braid index of~$\widehat{b}$ is three. Hence, up to cyclic permutation of the letters, and exchange of letters $\sigma_1$ and $\sigma_2$ (which preserves the braid index), the only remaining case is $w=\sigma_1\sigma_2^{e_2}\sigma_1^{e_3}\sigma_2^{e_4}$, with $e_2,e_4>1$. But in this case $w$ can be obtained from the trivial word going up in a simple resolution tree as follows: first we produce~$\sigma_1^2$, then we insert $\sigma_2^2$ twice to produce $\sigma_1\sigma_2^2\sigma_1\sigma_2^2$, and finally we double $\sigma_2$ and the second $\sigma_1$ as many times as needed (recall that $e_2$ and $e_4$ are greater than one). This implies that, in this case, the braid index of~$\widehat{b}$ is three.

Therefore, the only words which represent a braid whose closure has braid index smaller than three are, up to cyclic permutation of their letters, the ones in the statement.
\end{proof}

A straightforward consequence of the above result is the following, which could also be derived from~\cite[Theorem 1.1]{BM08}.

\begin{corollary}
Given a positive braid $b$ on three strands, the braid index of $\widehat b$ is smaller than three if and only if $b$ is conjugate to $\sigma_1^p \sigma_2$ for some $p\geq 0$.
\end{corollary}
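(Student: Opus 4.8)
The plan is to derive this directly from Theorem~\ref{3braidsT}, which already enumerates all positive words whose closures have braid index smaller than three; the entire task reduces to recognizing that every word in that list is conjugate in $B_3$ to some $\sigma_1^p\sigma_2$, and conversely. Two elementary facts drive the argument. First, a cyclic permutation of a braid word produces a conjugate braid (if $w=uv$ then $vu=u^{-1}wu$), so the phrase ``up to cyclic permutation'' appearing in Theorem~\ref{3braidsT} may be read as ``up to conjugacy.'' Second, in $B_3$ the half-twist satisfies $\Delta\sigma_i\Delta^{-1}=\sigma_{3-i}$, so the exchange of the two generators is itself an inner automorphism, namely conjugation by $\Delta$. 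This second fact is what lets me collapse the symmetric pairs in the list to a single normal form.

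For the forward direction I would run through the four families of Theorem~\ref{3braidsT}. The word $\sigma_2\sigma_1^p$ is cyclically $\sigma_1^p\sigma_2$, so it is already of the desired shape. The word $\sigma_1\sigma_2^p$ becomes $\sigma_2\sigma_1^p$ after conjugating by $\Delta$, hence is also conjugate to $\sigma_1^p\sigma_2$. For $\sigma_1\sigma_2\sigma_1^p\sigma_2^q$ with $p,q>0$ I would reuse the computation already carried out inside the proof of Theorem~\ref{3braidsT}: using $\sigma_1\sigma_2\sigma_1=\sigma_2\sigma_1\sigma_2$ together with a cyclic permutation, one sees it is conjugate to $\sigma_1\sigma_2^{p+q+1}$, which falls under the previous case. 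Finally $\sigma_2\sigma_1\sigma_2^p\sigma_1^q$ is the $\Delta$-conjugate of $\sigma_1\sigma_2\sigma_1^p\sigma_2^q$, so it is handled by the case just treated. Thus every word in the list is conjugate to some $\sigma_1^p\sigma_2$ with $p\geq 0$.

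For the converse I would use that closures are invariant under conjugacy, so if $b$ is conjugate to $\sigma_1^p\sigma_2$ then $\widehat b=\widehat{\sigma_1^p\sigma_2}$. Now $\sigma_1^p\sigma_2$ is cyclically $\sigma_2\sigma_1^p$, which appears explicitly in the list of Theorem~\ref{3braidsT}, so the braid index of $\widehat b$ is smaller than three. Equivalently, since $\sigma_2$ occurs exactly once, a single Markov destabilization identifies $\widehat{\sigma_1^p\sigma_2}$ with the closure of $\sigma_1^p\in B_2$, whose braid index is at most two.

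The only point requiring genuine care, and the one I would flag, is the role of the generator-exchange symmetry. In the proof of Theorem~\ref{3braidsT} this exchange is invoked merely as an external symmetry that \emph{preserves the braid index}; here I need the stronger assertion that it preserves the \emph{conjugacy class}, and for that I rely on $\Delta\sigma_i\Delta^{-1}=\sigma_{3-i}$, which turns the exchange into an honest inner automorphism of $B_3$. Once this observation is in place, everything else is a routine translation of Theorem~\ref{3braidsT} into the language of conjugacy.
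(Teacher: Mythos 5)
Your proposal is correct and takes essentially the same route as the paper: both directions reduce to Theorem~\ref{3braidsT}, with the forward direction verified by exhibiting conjugacies from each listed family to $\sigma_1^p\sigma_2$ (the paper writes the four conjugating elements explicitly, while you organize the same check via the inner automorphism $\Delta\sigma_i\Delta^{-1}=\sigma_{3-i}$), and the converse by conjugacy-invariance of closures plus destabilization of $\sigma_1^p\sigma_2$.
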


\begin{proof}
This result follows immediately from Theorem~\ref{3braidsT}, as all braids appearing in its statement are conjugate to $\sigma_1^p \sigma_2$ for some $p\geq 0$. More precisely, one has:
$$
 \sigma_2^{-1}\Delta^{-1}(\sigma_1\sigma_2^p)\Delta\sigma_2  = \sigma_1^p \sigma_2,
$$
$$
 \sigma_2^{-1}(\sigma_2\sigma_1^p)\sigma_2  = \sigma_1^p \sigma_2,
$$
$$
 \sigma_2^{-1}\Delta^{-1}\sigma_2^{-p}(\sigma_1\sigma_2\sigma_1^p\sigma_2^q)\sigma_2^p\Delta\sigma_2 = \sigma_1^{p+q+1}\sigma_2,
$$
$$
 \sigma_2^{-1}\sigma_1^{-p}(\sigma_2\sigma_1\sigma_2^p\sigma_1^q)\sigma_1^p\sigma_2 = \sigma_1^{p+q+1}\sigma_2.
$$
Conversely, every braid conjugated to $\sigma_1^p\sigma_2$ for some $p\geq 0$ has the same closure as $\sigma_1^p\sigma_2$, which has braid index smaller than three as it is the stabilization of the 2-strands braid $\sigma_1^p$.
\end{proof}

\begin{tabular}{ll}
Juan Gonz\'alez-Meneses & Pedro M. Gonz\'alez Manch\'on \\ & \\
Departamento de \'Algebra & Departamento de Matem\'atica Aplicada \\
Facultad de Matem\'aticas & EUITI-Universidad Polit\'ecnica de Madrid \\
Instituto de Matem\'aticas (IMUS) & Ronda de Valencia 3 \\
Universidad de Sevilla  & 28012 Madrid (Spain) \\
Apdo. 1160 &  {\it pedro.gmanchon@upm.es} \\
41080 Sevilla (Spain) & \\
{\it meneses@us.es} &
\end{tabular}

\end{document}